\documentclass[11pt,draftcls,onecolumn]{IEEEtran}
\usepackage{amsmath, amssymb, epsfig, graphicx, bm}
\usepackage{float, subfigure}
\usepackage{color}
\usepackage{amsthm}
\usepackage{amsfonts}
\usepackage{cite, url}
\usepackage{color,hyperref}
\definecolor{darkblue}{rgb}{0.0,0.0,0.4}
\hypersetup{colorlinks,breaklinks,
            linkcolor=darkblue,urlcolor=darkblue,
            anchorcolor=darkblue,citecolor=darkblue}

\newtheorem{theorem}{Theorem}

\newtheorem{assumption}{Assumption}

\hyphenation{op-tical net-works semi-conduc-tor}
\begin{document}

\title{\huge{On the Linear Convergence of the ADMM in Decentralized Consensus Optimization}}
\author{Wei Shi, Qing Ling, Kun Yuan, Gang Wu, and Wotao Yin
\thanks{Copyright (c) 2013 IEEE. Personal use of this material is
permitted. However, permission to use this material for any other
purposes must be obtained from the IEEE by sending a request to
pubs-permissions@ieee.org.

W. Shi, Q. Ling, K. Yuan, and G. Wu are with the Department of
Automation, University of Science and Technology of China, Hefei,
Anhui, China, 230026. W. Yin is with the Department of
Mathematics, University of California, Los Angeles, California,
USA, 90095. Corresponding author: Qing Ling. Email:
\href{mailto:qingling@mail.ustc.edu.cn}{qingling@mail.ustc.edu.cn}. Wei Shi is supported by Chinese
Scholarship Council grant 201306340046. Qing Ling is supported by
NSFC grant 61004137 and Chinese Scholarship Council grant
2011634506. G. Wu is supported by MOF/MIIT/MOST grant
BB2100100015. W. Yin is supported by ARL and ARO grant
W911NF-09-1-0383 and NSF grants DMS-0748839 and DMS-1317602. Part
of this paper appeared in the 38th International Conference on
Acoustics, Speech, and Signal Processing, Vancouver, Canada, May
26--31, 2013 \cite{Shi2013}.}}

\maketitle

\begin{abstract}
In decentralized consensus optimization, a connected network of
agents collaboratively minimize the sum of their local objective
functions over a common decision variable, where their information
exchange is restricted between the neighbors. To this end, one can
first obtain a problem reformulation and then apply the
alternating direction method of multipliers (ADMM). The method
applies iterative computation at the individual agents and
information exchange between the neighbors. This approach has been
observed to converge quickly and deemed powerful. This paper
establishes its linear convergence rate for the decentralized
consensus optimization problem with strongly convex local
objective functions. The theoretical convergence rate is
explicitly given in terms of the network topology, the properties
of local objective functions, and the algorithm parameter. This
result is not only a performance guarantee but also a guideline
toward accelerating the ADMM convergence.
\end{abstract}

\begin{IEEEkeywords}
Decentralized consensus optimization, alternating direction method
of multipliers (ADMM), linear convergence
\end{IEEEkeywords}
\IEEEpeerreviewmaketitle

\section{Introduction}

\IEEEPARstart{R}ecent advances in signal processing and control of
networked multi-agent systems have led to much research interests
in decentralized optimization
\cite{Inalhany2002,Ren2007,Johansson2008,Xiao2007,Dimakis2010,Predd2009,Mateos2010,Schizas2008,Ling2010,Bazerque2010,Bazerque2011,
Kekatos2013,Gan2013}. Decentralized optimization problems arising
in networked multi-agent systems include coordination of aircraft
or vehicle networks \cite{Inalhany2002,Ren2007,Johansson2008},
data processing of wireless sensor networks
\cite{Xiao2007,Dimakis2010,Predd2009,Mateos2010,Schizas2008,Ling2010},
spectrum sensing of cognitive radio networks
\cite{Bazerque2010,Bazerque2011}, state estimation and operation
optimization of smart grids \cite{Kekatos2013,Gan2013}, etc. In
these scenarios, the data is collected and/or stored in a
distributed manner; a fusion center is either disallowed or not
economical. Consequently, any computing tasks must be accomplished
in a decentralized and collaborative manner by the agents. This
approach can be powerful and efficient, as the computing tasks are
distributed over all the agents and information exchange occurs
only between the agents with direct communication links. There is no risk of  central computation overload or network congestion.

In this paper, we focus on \textit{decentralized consensus
optimization}, an important class of decentralized optimization in
which a network of $L$ agents cooperatively solve
\begin{equation} \label{eq:ps}
\begin{array}{cl}
\min\limits_{\tilde{x}} \quad \sum\limits_{i=1}^L f_i(\tilde{x}),
\end{array}
\end{equation}
over a common optimization variable $\tilde{x}$, where
$f_i(\tilde{x}): \mathbb{R}^N \rightarrow \mathbb{R}$ is the
local objective function known by agent $i$. This formulation
arises in averaging \cite{Johansson2008,Xiao2007,Dimakis2010},
learning \cite{Predd2009,Mateos2010}, and estimation
\cite{Schizas2008, Ling2010, Bazerque2010, Bazerque2011,
Kekatos2013} problems. Examples of $f_i(\tilde{x})$ include least
squares \cite{Johansson2008,Xiao2007,Dimakis2010}, regularized
least squares \cite{Mateos2010,Ling2010, Bazerque2010,
Bazerque2011}, as well as more general ones \cite{Predd2009}. The
values of $\tilde{x}$ can stand for average temperature of a room
\cite{Xiao2007,Dimakis2010}, frequency-domain occupancy of spectra
\cite{Bazerque2010,Bazerque2011}, states of a smart grid system
\cite{Kekatos2013,Gan2013}, and so on.

There exist several methods for decentralized consensus
optimization, including distributed subgradient descent algorithms
\cite{Nedic2009,Ram2010,Tsianos2013}, dual averaging methods
\cite{Duchi2012,Tsianos2012}, and the alternating direction method
of multipliers (ADMM)
\cite{Mateos2010,Schizas2008,Ling2010,Erseghe2011,Bertsekas1997}.
Among these algorithms, the ADMM demonstrates fast convergence in
many applications, e.g., \cite{Mateos2010,Schizas2008,Ling2010}.
However, how fast it converges and what factors affect the rate
are both unknown. This paper addresses these issues.

\subsection{Our Contributions}

Firstly, we establish the linear convergence rate of the ADMM that
is applied to decentralized consensus optimization with strongly
convex local objective functions. This theoretical result gives a
performance guarantee for the ADMM and validates the observation
in prior literature.

Secondly, we study how the network topology, the properties of
local objective functions, and the algorithm parameter affect the
convergence rate. The analysis provide guidelines for networking
strategies, objective-function splitting strategies, and algorithm
parameter settings to achieve faster convergence.

\subsection{Related Work}

Besides the ADMM, existing decentralized approaches for solving
(\ref{eq:ps}) include belief propagation \cite{Predd2009},
incremental optimization \cite{Rabbat2006}, subgradient descent
\cite{Nedic2009,Ram2010,Tsianos2013}, dual averaging
\cite{Duchi2012,Tsianos2012}, etc. Belief propagation and
incremental optimization require one to predefine a tree or loop
structure in the network, whereas the advantage of the ADMM,
subgradient descent, and dual averaging is that they do not rely
on any predefined structures. Subgradient descent and dual
averaging work well for asynchronous networks but suffer from slow
convergence. Indeed, for subgradient descent algorithms
\cite{Nedic2009} and \cite{Ram2010} establish the convergence rate
of $O(1/k)$, where $k$ is the number of iterations, to a
neighborhood of the optimal solution when the local subgradients
are bounded and the stepsize is fixed. Further assuming that the
local objective functions are strongly convex, choosing a dynamic
stepsize leads to a rate of $O(\log(k)/k)$ \cite{Tsianos2013}.
Dual averaging methods using dynamic stepsizes also have sublinear
rates, e.g., $O(\log(k)/\sqrt{k})$ as proved in \cite{Duchi2012}
and \cite{Tsianos2012}.

The decentralized ADMM approaches use synchronous steps by all the
agents but have much faster empirical convergence, as demonstrated
in many applications \cite{Mateos2010,Schizas2008,Ling2010}.
However, existing convergence rate analysis of the ADMM is
restricted to the classic, centralized computation. The
centralized ADMM has a sublinear convergence rate $O(1/k)$ for
general convex optimization problems \cite{Yuan2012}. In
\cite{Hong2013} an ADMM with restricted stepsizes is proposed and
proved to be linearly convergent for certain types of non-strongly
convex objective functions. A recent paper \cite{Deng2013} shows a
linear convergence rate $O(1/a^k)$ for some $a > 1$ under a
strongly convex assumption, and our paper extends the analysis
tools therein to the decentralized regime.

A notable work about convergence rate analysis is
\cite{Erseghe2011}, which proves the linear convergence rate of
the ADMM applied to the average consensus problem, a special case
of (\ref{eq:ps}) in which $f_i(\tilde{x})=\|\tilde{x}-y_i\|_2^2$
with $y_i$ being a local measurement vector of agent $i$. Its
analysis takes a state-transition equation approach, which is not
applicable to the more general local objective functions
considered in this paper.

\subsection{Paper Organization and Notation}

This paper is organized as follows. Section \ref{sec: ADMM_opt}
reformulates the decentralized consensus optimization problem and
develops an algorithm based on the ADMM. Section \ref{sec:
rate_analysis} analyzes the linear convergence rate of the ADMM
and shows how to accelerate the convergence through tuning the
algorithm parameter. Section \ref{sec: num_exp} provides extensive
numerical experiments to validate the theoretical analysis in
Section \ref{sec: rate_analysis}. Section \ref{sec: conclusion}
concludes the paper.

In this paper we denote $\|x\|_2$ as the Euclidean norm of a
vector $x$ and $\langle x, y \rangle$ as the inner product of two
vectors $x$ and $y$. Given a semidefinite matrix $G$ with proper
dimensions, the $G$-norm of $x$ is $\sqrt{x^T G x}$. We let
$\sigma_{\max}(G)$ be the operator that returns the largest
singular value of $G$ and $\tilde{\sigma}_{\min}(G)$ be the one
that returns the smallest nonzero singular value of $G$.

We use two kinds of definitions of convergence, Q-linear
convergence and R-linear convergence. We say that a sequence
$y^k$, where the superscript $k$ stands for time index, Q-linearly
converges to a point $y^*$ if there exists a number $\rho \in
(0,1)$ such that
$\lim\limits_{k\rightarrow\infty}\frac{\|y^{k+1}-y^*\|}{\|y^{k}-y^*\|}=\rho$
with $\|\cdot\|$ being a vector norm. We say that a sequence $x^k$
R-linearly converges to a point $x^*$ if for all $k$,
$\|x^{k}-x^*\|\leq\|y^k-y^*\|$ where $y^k$ Q-linearly converges to
$y^*$.

\section{The ADMM for Decentralized Consensus Optimization}\label{sec: ADMM_opt}

In this section, we first reformulate the decentralized consensus
optimization problem (\ref{eq:ps}) such that it can be solved by
the ADMM (see Section \ref{sec:2a}). Then we develop the
decentralized ADMM approach and provide a simplified decentralized
algorithm (see Section \ref{sec:2b}).

\subsection{Problem Formulation}\label{sec:2a}

Throughout the paper, we consider a network consisting of $L$
agents bidirectionally connected by $E$ edges (and thus
$2E$ arcs). We can describe the network as a symmetric directed
graph $\mathcal{G}_\mathrm{d}=\{\mathcal{V}, \mathcal{A}\}$ or an undirected graph $\mathcal{G}_\mathrm{u}=\{\mathcal{V}, \mathcal{E}\}$,
where $\mathcal{V}$ is the set of vertexes with cardinality
$|\mathcal{V}|=L$, $\mathcal{A}$ is the set of arcs with
$|\mathcal{A}|=2E$, and $\mathcal{E}$ is the set of edges with
$|\mathcal{E}|=E$. Algorithms that solve the decentralized
consensus optimization problem (\ref{eq:ps}) are developed based on this graph.

Generally speaking, the ADMM applies to the convex optimization
problem in the form of
\begin{equation} \label{eq:ADMM}
\begin{array}{cl}
\min\limits_{y_1,y_2} &g_1(y_1)+g_2(y_2), \\
\mbox{s.t.} &C_1y_1+C_2y_2=b, \\
\end{array}
\end{equation}
where $y_1$ and $y_2$ are optimization variables, $g_1$ and $g_2$
are convex functions, and $C_1y_1+C_2y_2=b$ is a linear constraint
of $y_1$ and $y_2$. The ADMM solves a sequence of subproblems
involving $g_1$ and $g_2$ one at a time and iterates to converge
as long as a saddle point exists.

To solve (\ref{eq:ps}) with the ADMM in a decentralized manner, we
reformulate it as
\begin{equation} \label{eq:nc}
\begin{array}{cl}
\min\limits_{\{x_i\},\{z_{ij}\}} &\sum\limits_{i=1}^L f_i(x_i), \\
\mbox{s.t.}\ &x_i = z_{ij},~ x_j = z_{ij}, ~\forall (i,j) \in \mathcal{A}. \\
\end{array}
\end{equation}
Here $x_i$ is the local copy of the common optimization variable
$\tilde{x}$ at agent $i$ and $z_{ij}$ is an auxiliary variable
imposing the consensus constraint on neighboring agents $i$ and
$j$. In the constraints, $\{x_i\}$ are separable when $\{z_{ij}\}$
are fixed, and vice versa. Therefore, (\ref{eq:nc}) lends itself
to decentralized computation in the ADMM framework. Apparently, (\ref{eq:nc}) is equivalent to (\ref{eq:ps}) when the
network is connected.

Defining $x \in \mathbb{R}^{LN}$ as a vector concatenating all
$x_i$, $z \in \mathbb{R}^{2EN}$ as a vector concatenating all
$z_{ij}$, and $f(x)=\sum_{i=1}^L f_i(x_i)$, (\ref{eq:nc}) can be
written in a matrix form as
\begin{equation} \label{eq:nc-mat}
\begin{array}{cl}
\min\limits_{x,z} &f(x)+g(z), \\
\mbox{s.t.} &Ax + Bz = 0, \\
\end{array}
\end{equation}
where $g(z)=0$, which fits the form of (\ref{eq:ADMM}), and is
amenable to the ADMM. Here $A=[A_1; A_2]$; $A_1, A_2 \in
\mathbb{R}^{2EN \times LN}$ are both composed of $2E \times L$
blocks of $N \times N$ matrices. If $(i,j) \in \mathcal{A}$ and
$z_{ij}$ is the $q$th block of $z$, then the $(q,i)$th block of
$A_1$ and the $(q,j)$th block of $A_2$ are $N \times N$ identity
matrices $I_{N}$; otherwise the corresponding blocks are $N \times
N$ zero matrices $0_{N}$. Also, we have $B=[-I_{2EN}; -I_{2EN}]$
with $I_{2EN}$ being a $2EN \times 2EN$ identity matrix.

\subsection{Algorithm Development} \label{sec:2b}

Now we apply the ADMM to solve (\ref{eq:nc-mat}). The augmented
Lagrangian of (\ref{eq:nc-mat}) is
\begin{equation} \label{eq:alf}
L_c(x,z,\lambda) = f(x) + \langle \lambda, Ax+Bz \rangle +
\frac{c}{2}\|Ax+Bz\|_2^2, \nonumber
\end{equation}
where $\lambda \in \mathbb{R}^{4EN}$ is the Lagrange multiplier
and $c$ is a positive algorithm parameter. At iteration $k+1$, the
ADMM firstly minimizes $L_c(x,z^k,\lambda^k)$ to obtain $x^{k+1}$,
secondly minimizes $L_c(x^{k+1},z,\lambda^k)$ to obtain $z^{k+1}$,
and finally updates $\lambda^{k+1}$ from $x^{k+1}$ and $z^{k+1}$.
The updates are
\begin{equation} \label{eq:lambda-update}
\begin{array}{cr}
x\text{-update:} & \nabla f(x^{k+1}) + A^T \lambda^k + cA^T(Ax^{k+1}+Bz^k)=0, \\
z\text{-update:} & B^T \lambda^k + cB^T(Ax^{k+1}+Bz^{k+1})=0, \\
\lambda\text{-update:} & \lambda^{k+1}-\lambda^k-c(Ax^{k+1}+Bz^{k+1})=0, \\
\end{array}
\end{equation}
where $\nabla f(x^{k+1})$ is the gradient of $f(x)$ at point
$x=x^{k+1}$ if $f$ is differentiable, or is a subgradient if $f$ is
non-differentiable.

Next we show that if the initial values of $z$ and $\lambda$ are
properly chosen the ADMM updates in (\ref{eq:lambda-update}) can
be simplified (see also the derivation in \cite{Mateos2010}).
Multiplying the two sides of the $\lambda$-update by $A^T$ and adding
it to the $x$-update, we have $\nabla f(x^{k+1}) + A^T
\lambda^{k+1} + cA^TB(z^k-z^{k+1})=0$. Further, multiplying the
two sides of the $\lambda$-update by $B^T$ and adding it to the $z$-update we have $B^T \lambda^{k+1}=0$. Therefore
(\ref{eq:lambda-update}) can be equivalently expressed as
\begin{equation} \label{eq:lambda-update-new}
\begin{array}{cr}
& \nabla f(x^{k+1}) + A^T \lambda^{k+1} + cA^TB(z^k-z^{k+1})=0, \\
& B^T \lambda^{k+1}=0, \\
& \lambda^{k+1}-\lambda^k-c(Ax^{k+1}+Bz^{k+1})=0. \\
\end{array}
\end{equation}
Letting $\lambda = [\beta; \gamma]$ with $\beta, \gamma \in
\mathbb{R}^{2EN}$ and recalling $B=[-I_{2EN}; -I_{2EN}]$, we know
$\beta^{k+1}=-\gamma^{k+1}$ from the second equation of
(\ref{eq:lambda-update-new}). Therefore, the first equation in
(\ref{eq:lambda-update-new}) reduces to $\nabla f(x^{k+1}) + M_-
\beta^{k+1} - cM_+(z^k-z^{k+1})=0$ where $M_+=A_1^T+A_2^T$ and
$M_-=A_1^T-A_2^T$. The third equation in
(\ref{eq:lambda-update-new}) splits to two equations
$\beta^{k+1}-\beta^k-cA_1x^{k+1}+cz^{k+1}=0$ and
$\gamma^{k+1}-\gamma^k-cA_2x^{k+1}+cz^{k+1}=0$. If we choose the
initial value of $\lambda$ as $\beta^0=-\gamma^0$ such that
$\beta^k = -\gamma^k$ holds for $k=0,1,\cdots$, summing and
subtracting these two equations result in
$\frac{1}{2}M_+^Tx^{k+1}-z^{k+1}=0$ and
$\beta^{k+1}-\beta^k-\frac{c}{2}M_-^Tx^{k+1}=0$, respectively. If we further choose the initial value of $z$ as
$z^0=\frac{1}{2}M_+^Tx^0$, $\frac{1}{2}M_+^Tx^k-z^k=0$ holds for
$k=0,1,\cdots$.

To summarize, with initialization $\beta^0=-\gamma^0$ and
$z^0=\frac{1}{2}M_+^Tx^0$, (\ref{eq:lambda-update-new}) reduces to
\begin{equation} \label{eq:beta-update}
\begin{array}{cr}
& \nabla f(x^{k+1}) + M_- \beta^{k+1} - cM_+(z^k-z^{k+1})=0, \\
& \beta^{k+1}-\beta^k-\frac{c}{2}M_-^Tx^{k+1}=0, \\
& \frac{1}{2}M_+^Tx^k-z^k=0. \\
\end{array}
\end{equation}
In Section \ref{sec: rate_analysis} we will analyze the
convergence rate of the ADMM updates (\ref{eq:beta-update}). The
analysis requires an extra initialization condition that $\beta^0$
lies in the column space of $M_-^T$ (e.g., $\beta^0=0$) such that
$\beta^{k+1}$ also lies in the column space of $M_-^T$; the reason
will be given in Section \ref{sec: rate_analysis}.

Indeed, (\ref{eq:beta-update}) also leads to a simple
decentralized algorithm that involves only an $x$-update and a new
multiplier update. To see this, substituting
$\frac{1}{2}M_+^Tx^k-z^k=0$ into the first two equations of
(\ref{eq:beta-update}) we have
\begin{equation} \label{eq:beta-update-temp}
\begin{array}{cr}
& \nabla f(x^{k+1}) + M_- \beta^{k+1} - \frac{c}{2}M_+M_+^Tx^k + \frac{c}{2}M_+M_+^Tx^{k+1}=0, \\
& \beta^{k+1}-\beta^k-\frac{c}{2}M_-^Tx^{k+1}=0, \\
\end{array}
\end{equation}
which is irrelevant with $z$. Note that in the first equation of
(\ref{eq:beta-update-temp}) the $x$-update relies on $M_-
\beta^{k+1}$ other than $\beta^k$. Therefore, multiplying the
second equation with $M_-$ we have $M_- \beta^{k+1}-
M_-\beta^k-\frac{c}{2}M_-M_-^Tx^{k+1}=0$. Substituting it to the
first equation of (\ref{eq:beta-update-temp}) we obtain the
$x$-update where $x^{k+1}$ is decided by $x^k$ and $M_-\beta^k$,
i.e., $\nabla f(x^{k+1}) + M_- \beta^{k} + (\frac{c}{2}M_+M_+^T +
\frac{c}{2}M_-M_-^T)x^{k+1} - \frac{c}{2}M_+M_+^Tx^k=0$. Letting
$W \in \mathbb{R}^{LN \times LN}$ be a block diagonal matrix with
its $(i,i)$th block being the degree of agent $i$ multiplying
$I_N$ and other blocks being $0_N$, $L_+=\frac{1}{2}M_+M_+^T$,
$L_-=\frac{1}{2}M_-M_-^T$, we know $W=\frac{1}{2}(L_++L_-)$.
Defining a new multiplier $\alpha=M_-\beta \in \mathbb{R}^{LN}$,
we obtain a simplified decentralized algorithm
\begin{equation} \label{eq:alpha-update}
\begin{array}{cr}
x\text{-update:} & \nabla f(x^{k+1}) + \alpha^k + 2cWx^{k+1} - cL_+x^k=0, \\
\alpha\text{-update:} & \alpha^{k+1}-\alpha^k-cL_-x^{k+1}=0. \\
\end{array}
\end{equation}

The introduced matrices $M_+$, $M_-$, $L_+$, $L_-$, and $W$ are
related to the underlying network topology. With regard to the
undirected graph $\mathcal{G}_\mathrm{u}$, $M_+$ and $M_-$ are the extended
unoriented and oriented incidence matrices, respectively; $L_+$
and $L_-$ are the extended signless and signed Laplacian matrices,
respectively; and $W$ is the extended degree matrix. By
``extended'', we mean replacing every $1$ by $I_N$, $-1$ by
$-I_N$, and $0$ by $0_N$ in the original definitions of these
matrices \cite{Chung1997,Fiedler1973,Cvetkovic2007,Chen2010}.

The updates in (\ref{eq:alpha-update}) are distributed to agents.
Note that $x=[x_1;\cdots;x_L]$ where $x_i$ is the local solution
of agent $i$ and $\alpha=[\alpha_1;\cdots;\alpha_L]$ where
$\alpha_i \in \mathbb{R}^N$ is the local Lagrange multiplier of
agent $i$. Recalling the definitions of $W$, $L_+$ and $L_-$,
(\ref{eq:alpha-update}) translates the update of agent $i$ by
\begin{equation} \label{eq:distributed}
\begin{array}{cr}
&\nabla f_i(x_i^{k+1})+\alpha_i^{k}+2c|\mathcal{N}_i|x_i^{k+1}-c\left(|\mathcal{N}_i|x_i^{k}+\sum\limits_{j\in\mathcal{N}_i} x_j^{k}\right)=0, \\
&\alpha_i^{k+1}=\alpha_i^{k}+c\left(|\mathcal{N}_i|x_i^{k+1}-\sum\limits_{j\in\mathcal{N}_i} x_j^{k+1}\right), \\
\end{array}
\end{equation}
where $\mathcal{N}_i$ denotes the set of neighbors of agent $i$.
The algorithm is fully decentralized since the updates of $x_i$
and $\alpha_i$ only rely on local and neighboring information. The
decentralized consensus optimization algorithm based on the ADMM
is outlined in Table \ref{tab: DCO_ADMM}.

\begin{table}
  \begin{center}
  \caption{\textbf{Algorithm 1:} Decentralized Consensus Optimization based on the ADMM}
    \vspace{-1.5em}
    \begin{tabular}{l}
    \hline
    Input functions $f_i$; initialize variables $x_i^0=0$, $\alpha_i^0=0$; set algorithm parameter $c>0$; \\
    For $k=0,1,\cdots$, every agent $i$ do \\
    \ \ \ \ Update $x_i^{k+1}$ by solving $\nabla f_i(x_i^{k+1})+\alpha_i^{k}+2c|\mathcal{N}_i|x_i^{k+1}-c\left(|\mathcal{N}_i|x_i^{k}+\sum\limits_{j\in\mathcal{N}_i} x_j^{k}\right)=0$;\\
    \ \ \ \ Update $\alpha_i^{k+1}=\alpha_i^{k}+c\left(|\mathcal{N}_i|x_i^{k+1}-\sum\limits_{j\in\mathcal{N}_i} x_j^{k+1}\right)$;\\
    End for\\
    \hline
    \end{tabular}
    \label{tab: DCO_ADMM}
  \end{center}
\end{table}

\section{Convergence Rate Analysis}\label{sec: rate_analysis}

This section first establishes the linear convergence rate of the
ADMM in decentralized consensus optimization with strongly convex
local objective functions (see Section \ref{sec:3a}); the detailed
proof of the main theoretical result is placed in Appendix. We then discuss how to tune the parameter and accelerate the convergence (see Section \ref{sec:3b}).

\subsection{Main Theoretical Result}\label{sec:3a}

Throughout this paper, we make the following assumption that the
local objective functions are strongly convex and have Lipschitz
continuous gradients; note that the latter implies
differentiability.

\begin{assumption}\label{ass: functions}
The local objective functions are strongly convex. For each agent
$i$ and given any $\tilde{x}_a, \tilde{x}_b \in \mathbb{R}^N$
$\langle \nabla f_i(\tilde{x}_a) - \nabla f_i(\tilde{x}_b),
\tilde{x}_a-\tilde{x}_b\rangle \geq m_{f_i} \|\tilde{x}_a -
\tilde{x}_b\|_2^2$  with $m_{f_i}> 0$. The gradients of the local
objective functions are Lipschitz continuous. For each agent $i$
and given any $\tilde{x}_a, \tilde{x}_b \in \mathbb{R}^N$,
$\|\nabla f_i(\tilde{x}_a) - \nabla f_i(\tilde{x}_b)\|_2 \leq
M_{f_i} \|\tilde{x}_a - \tilde{x}_b\|_2$ with $M_{f_i}
> 0$.
\end{assumption}

Recall the definition $f(x)=\sum_{i=1}^L f_i(x_i)$. Assumption
\ref{ass: functions} directly indicates that $f(x)$ is strongly
convex (i.e., $\langle \nabla f(x_a) - \nabla f(x_b), x_a -
x_b\rangle \geq m_f \|x_a - x_b\|_2^2$ given any $x_a, x_b \in
\mathbb{R}^{LN}$ with $m_f = \min_i m_{f_i}$) and the gradient of
$f(x)$ is Lipschitz continuous (i.e., $\|\nabla f(x_a) - \nabla
f(x_b)\|_2 \leq M_f \|x_a - x_b\|_2$ for any $x_a, x_b \in
\mathbb{R}^{LN}$ with $M_f = \max_i M_{f_i}$).

Although the convergence of Algorithm 1 to the optimal solution of
(\ref{eq:nc-mat}) can be shown based on the convergence property
of the ADMM (see e.g., \cite{Bertsekas1997}), establishing its
linear convergence is nontrivial. In \cite{Deng2013} the linear
convergence of the centralized ADMM is proved given that either
$g(z)$ is strongly convex or $B$ is full row-rank in
\eqref{eq:nc-mat}. However, the decentralized consensus
optimization problem does not satisfy these conditions. The
function $g(z)=0$ is not strongly convex, and the matrix
$B=[-I_{2EN}; -I_{2EN}]$ is row-rank deficient.

Next we will analyze the convergence rate of the ADMM iteration
(\ref{eq:beta-update}). The analysis requires an extra
initialization condition that $\beta^0$ lies in the column space
of $M_-^T$ such that $\beta^{k+1}$ also lies in the column space
of $M_-^T$, which is necessary in the analysis. Note that there is
a unique optimal multiplier $\beta^*$ lying in the column space of
$M_-^T$. To see so, consider the KKT conditions of
(\ref{eq:nc-mat})
\begin{equation} \label{eq:KKT-new}
\begin{array}{cr}
& \nabla f(x^*) + M_- \beta^*=0, \\
& M_-^T x^*=0, \\
& \frac{1}{2}M_+^T x^* - z^*=0, \\
\end{array}
\end{equation}
where ($x^*$, $z^*$) is the unique primal optimal solution and the
uniqueness follows from the strong convexity of $f(x)$ as well as
the consensus constraint $Ax+Bz=0$. Since the consensus
constraints $Ax + Bz = 0$ are feasible, there is at least one
optimal multiplier $\tilde{\beta}$ exists such that $\nabla f(x^*)
+ M_- \tilde{\beta}=0$. We show that its projection onto the
column space of $M_-^T$, denoted by $\beta^*$, is also an optimal
multiplier. According to the property of projection, $M_-
(\tilde{\beta} - \beta^*) = 0$ and hence $M_- \tilde{\beta} = M_-
\beta^*$. Therefore, the projection $\beta^*$ that lies in the
column space of $M_-^T$ also satisfies $\nabla f(x^*) + M_-
\beta^*=0$. Next we show the uniqueness of such a $\beta^*$ by
contradiction. Consider two different vectors $M_-^T v_1, M_-^T
v_2 \in \mathbb{R}^{2EN}$ that both lie in the column space of
$M_-^T$ and satisfy the equation. Therefore, we have $\nabla
f(x^*) + M_- M_-^T v_1=0$ and $\nabla f(x^*) + M_- M_-^T v_2=0$.
Subtracting them yields $M_- M_-^T (v_1 - v_2) = 0$. Since $\|M_-
M_-^T (v_1 - v_2)\|_2 \geq \tilde{\sigma}_{\min}(M_-) \|M_-^T (v_1
- v_2)\|_2$ where $\tilde{\sigma}_{\min}(M_-)$ is the smallest
nonzero singular value of $M_-$, we conclude that $\|M_-^T (v_1 -
v_2)\|_2=0$ and consequently $M_-^T v_1 = M_-^T v_2$ which
contradicts with the assumption of $M_-^T v_1$ and $M_-^T v_2$
being different. Hence, $\beta^*$ is the unique dual optimal
solution that lies in the column space of $M_-^T$.

Our main theoretical result considers the convergence of a vector that
concatenating the primal variable $z$ and the dual variable
$\beta$, which is common in the convergence rate analysis of the
ADMM \cite{Yuan2012,Deng2013,Hong2013}. Let us introduce
\begin{equation} \label{eq:ug}
 u=\left(
             \begin{array}{c}
               z \\
               \beta
             \end{array}
           \right),
G=\left(
             \begin{array}{cc}
               cI_{2EN} & 0_{2EN} \\
               0_{2EN} & \frac{1}{c}I_{2EN}  \\
             \end{array}
           \right).
\end{equation}
We will show that $u^k=[z^k;\beta^k]$ is Q-linearly convergent to
its optimal $u^*=[z^*;\beta^*]$ with respect to the $G$-norm.
Further, the Q-linear convergence of $u^k=[z^k;\beta^k]$ to
$u^*=[z^*;\beta^*]$ implies that $x^k$ is R-linearly convergent to
its optimal $x^*$.

\begin{theorem}\label{theorem1}
Consider the ADMM iteration (\ref{eq:beta-update}) that solves
(\ref{eq:nc-mat}). The primal variables $x$ and $z$ have their
unique optimal values $x^*$ and $z^*$, respectively; the dual
variable $\beta$ has its unique optimal value $\beta^*$ that lies
in the column space of $M_-^T$. Recall the definition of $u$ and
$G$ defined in (\ref{eq:ug}). If the local objective functions
satisfy Assumption \ref{ass: functions} and the dual variable
$\beta$ is initialized such that $\beta^0$ lies in the column
space of $M_-^T$, then for any $\mu > 1$, $u^k=[z^k;\beta^k]$ is
Q-linearly convergent to its optimal $u^*=[z^*;\beta^*]$ with
respect to the $G$-norm
\begin{equation}\label{eq:final}
\begin{array}{cl}
\|u^{k+1}-u^*\|_G^2\leq\frac{1}{1+\delta}\|u^k-u^*\|_G^2,
\end{array}
\end{equation}
where
\begin{equation}\label{eq:bound1}
\begin{array}{cl}
 \delta = \min \left\{
\frac{(\mu-1)\tilde{\sigma}_{\min}^2(M_-)}{\mu
\sigma_{\max}^2(M_+)},
\frac{m_f}{\frac{c}{4}\sigma_{\max}^2(M_+)+\frac{\mu}{c}M_f^2\tilde{\sigma}_{\min}^{-2}(M_-)}\right\}>0.
\end{array}
\end{equation}
Further, $x^k$ is R-linearly convergent to $x^*$ following from
\begin{equation}\label{eq:step1-new}
\begin{array}{cl}
\|x^{k+1}-x^*\|_2^2\leq\frac{1}{m_f}\|u^k-u^*\|_G^2. \\
\end{array}
\end{equation}
\end{theorem}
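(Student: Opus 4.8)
The plan is to reduce the asserted Q-linear bound to a one-step \emph{descent} inequality and then establish that inequality from the strong convexity and Lipschitz continuity of $f$ together with the spectral properties of $M_+$ and $M_-$. Since $1/(1+\delta)$ is the target contraction factor, (\ref{eq:final}) is equivalent to
\begin{equation}
\|u^k-u^*\|_G^2-\|u^{k+1}-u^*\|_G^2\geq\delta\,\|u^{k+1}-u^*\|_G^2,
\end{equation}
so it suffices to bound the decrease of the $G$-norm from below by $\delta$ times the current $G$-distance to optimality.

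First I would set up the error recursions. Subtracting the first KKT condition in (\ref{eq:KKT-new}) from the first line of (\ref{eq:beta-update}), and using $z^k=\tfrac12 M_+^Tx^k$ (third line of (\ref{eq:beta-update})) to write $z^k-z^{k+1}=\tfrac12 M_+^T(x^k-x^{k+1})$, gives
\begin{equation}
\nabla f(x^{k+1})-\nabla f(x^*)+M_-(\beta^{k+1}-\beta^*)-\tfrac{c}{2}M_+M_+^T(x^k-x^{k+1})=0.
\end{equation}
Likewise, the second line of (\ref{eq:beta-update}) together with $M_-^Tx^*=0$ yields $\beta^{k+1}-\beta^k=\tfrac{c}{2}M_-^T(x^{k+1}-x^*)$. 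These two identities express the dual progress $\beta^{k+1}-\beta^k$ and the term $M_-(\beta^{k+1}-\beta^*)$ through $x^{k+1}-x^*$, which is the bridge coupling the primal and dual errors.

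Next I would expand $\|u^k-u^*\|_G^2-\|u^{k+1}-u^*\|_G^2$ via $G=\mathrm{diag}(cI_{2EN},\tfrac1c I_{2EN})$ and a polarization identity, then take the inner product of the first error recursion with $x^{k+1}-x^*$ and substitute the expression for $\beta^{k+1}-\beta^k$. Monotonicity of $\nabla f$ contributes the crucial term $m_f\|x^{k+1}-x^*\|_2^2$, while the remaining terms collect into squared differences and cross terms. To convert this into a lower bound of the form $\delta\|u^{k+1}-u^*\|_G^2$, I would bound $\|u^{k+1}-u^*\|_G^2=c\|z^{k+1}-z^*\|_2^2+\tfrac1c\|\beta^{k+1}-\beta^*\|_2^2$ from above: for the primal part, $z^{k+1}-z^*=\tfrac12 M_+^T(x^{k+1}-x^*)$ gives $\|z^{k+1}-z^*\|_2^2\leq\tfrac14\sigma_{\max}^2(M_+)\|x^{k+1}-x^*\|_2^2$; for the dual part, because $\beta^{k+1}-\beta^*$ lies in the column space of $M_-^T$ (this is exactly where the initialization hypothesis enters), one has $\|\beta^{k+1}-\beta^*\|_2^2\leq\tilde\sigma_{\min}^{-2}(M_-)\|M_-(\beta^{k+1}-\beta^*)\|_2^2$, and the first error recursion together with $\|\nabla f(x^{k+1})-\nabla f(x^*)\|_2\leq M_f\|x^{k+1}-x^*\|_2$ controls $M_-(\beta^{k+1}-\beta^*)$. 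The free parameter $\mu>1$ enters through a Young-type splitting of the cross terms; optimizing the two competing bounds (one limited by the primal residual $M_+^T(x^k-x^{k+1})$, one by the dual residual) produces precisely the two arguments of the minimum in (\ref{eq:bound1}), namely $\tfrac{c}{4}\sigma_{\max}^2(M_+)$ and $\tfrac{\mu}{c}M_f^2\tilde\sigma_{\min}^{-2}(M_-)$ in the denominator of the second term.

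The hardest part is handling the rank deficiency of $M_-$ and $M_+$, which is precisely what breaks the classical Deng--Yin conditions of \cite{Deng2013}: $M_-^Tv=0$ has nontrivial solutions, so $M_-$ is not invertible on all of $\mathbb{R}^{2EN}$. The remedy, and the technical crux, is the invariance of the iterates within the column space of $M_-^T$, which lets me replace the (nonexistent) smallest singular value by the smallest \emph{nonzero} singular value $\tilde\sigma_{\min}(M_-)$ wherever $M_-$ is inverted. Finally, the R-linear convergence of $x^k$ in (\ref{eq:step1-new}) falls out as a byproduct of the same inner-product step: it shows $m_f\|x^{k+1}-x^*\|_2^2$ is dominated by $\|u^k-u^*\|_G^2$, and since $u^k$ converges Q-linearly in the $G$-norm, $x^k$ inherits R-linear convergence.
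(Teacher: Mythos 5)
Your proposal follows essentially the same route as the paper's own proof: subtract the KKT conditions from the iteration to get the three error recursions, pair the strong-convexity inner product $\langle x^{k+1}-x^*,\nabla f(x^{k+1})-\nabla f(x^*)\rangle$ with the polarization identity to obtain the telescoping $G$-norm descent, then bound $c\|z^{k+1}-z^*\|_2^2$ via $\sigma_{\max}(M_+)$ and $\tfrac1c\|\beta^{k+1}-\beta^*\|_2^2$ via the Young-type splitting with parameter $\mu$, Lipschitz continuity of $\nabla f$, and the column-space invariance that permits the use of $\tilde\sigma_{\min}(M_-)$. The plan is correct and matches the paper's argument, including the derivation of (\ref{eq:step1-new}) as a byproduct of the same descent inequality.
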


\begin{proof}
See Appendix.
\end{proof}

In Theorem \ref{theorem1}, (\ref{eq:bound1}) shows that
$\|u^{k+1}-u^*\|_G^2$ is no greater than
$\frac{1}{1+\delta}\|u^k-u^*\|_G^2$ and hence $u^k$ converges to
$u^*$ Q-linearly at a rate $$\rho\leq\sqrt{\frac{1}{1+\delta}}.$$
A larger $\delta$ guarantees faster convergence. On the other
hand, $\frac{1}{1+\delta}$ is a theoretical upper bound
of the convergence rate, probably not tight. The Q-linear
convergence of $u^k$ to $u^*$ translates to the R-linear
convergence of $x$ to $x^*$ as shown in (\ref{eq:step1-new}).

\subsection{Accelerating the Convergence}\label{sec:3b}

From (\ref{eq:bound1}) we can find that the theoretical
convergence rate (more precisely, its upper bound) is given in terms of the network topology, the properties of local objective
functions, and the algorithm parameter. The value of $\delta$ is
related with the free parameter $\mu > 1$, $\sigma_{\max}(M_+)$, $\tilde{\sigma}_{\min}(M_-)$, the strongly convexity constant $m_f$ of $f$, the
Lipschitz constant $M_f$ of $\nabla f$, and the algorithm
parameter $c$.

Now we consider tuning the free parameter $\mu$ and the algorithm
parameter $c$ to maximize $\delta$ and thus accelerate the
convergence (i.e., through minimizing $\frac{1}{1+\delta}$ that is
indeed an upper bound). From the analysis we will see more clearly
how the convergence rate is influenced by the network topology and
the local objective functions. For convenience, we define the
condition number of $f$ as
$$\kappa_f = \frac{M_f}{m_f}.$$
Recall that $m_f=\min_i m_{f_i}$ and $M_f=\max_i M_{f_i}$. Therefore,
$\kappa_f$ is an upper bound of the condition numbers of the local
objective functions. We also define the condition number of the
underlying graph $\mathcal{G}_\mathrm{d}$ or $\mathcal{G}_\mathrm{u}$ as
$$\kappa_\mathrm{G} =\frac{\sigma_{\max}(M_+)}{\tilde{\sigma}_{\min}(M_-)}=\sqrt{\frac{\sigma_{\max}(L_+)}{\tilde{\sigma}_{\min}(L_-)}}.$$
With regard to the underlying graph, the minimum nonzero singular
value of the extended signed Laplacian matrix $L_-$, denoted as
$\tilde{\sigma}_{\min}(L_-)$, is known as its algebraic
connectivity \cite{Chung1997,Fiedler1973}. The maximum singular
value of the extended signless Laplacian matrix $L_+$, denoted as
$\sigma_{\max}(L_+)$, has also drawn research interests recently
\cite{Cvetkovic2007,Chen2010}. Both $\sigma_{\max}(L_+)$ and
$\tilde{\sigma}_{\min}(L_-)$ are measures of network connectedness
but the former is weaker. Roughly speaking, larger
$\sigma_{\max}(L_+)$ and $\tilde{\sigma}_{\min}(L_-)$ mean
stronger connectedness, and a larger $\kappa_\mathrm{G}$ means
weaker connectedness.


Keeping the definitions of $\kappa_f$ and $\kappa_\mathrm{G}$ in mind, the
following theorem shows how to choose the free  parameter $\mu$
and the algorithm parameter $c$ to maximize $\delta$ and
accelerate the convergence.

\begin{theorem}\label{theorem2}
If the algorithm parameter $c$ in (\ref{eq:bound1}) is chosen as
\begin{equation}\label{eq:c}
\begin{array}{cl}
c=c_\mathrm{t}=\frac{2\mu^{\frac{1}{2}}M_f}{\sigma_{\max}(M_+)\tilde{\sigma}_{\min}(M_-)}
\end{array}
\end{equation}
where
\begin{equation}\label{eq:mu}
\begin{array}{cl}
\mu=\left(1+\frac{\kappa_\mathrm{G}^2}{2\kappa_f^2}-\frac{\kappa_\mathrm{G}}{2\kappa_f}\sqrt{\frac{\kappa_\mathrm{G}^2}{\kappa_f^2}+4}\right)^{-1}>1,
\end{array}
\end{equation}
then
\begin{equation}\label{eq:bound3}
\begin{array}{cl}
\delta = \delta_\mathrm{t} =
\frac{1}{2\kappa_f}\sqrt{\frac{1}{\kappa_f^2}+\frac{4}{\kappa_\mathrm{G}^2}}-\frac{1}{2\kappa_f^2}
\end{array}
\end{equation}
maximizes the value of $\delta$ in (\ref{eq:bound1}) and ensures
that (\ref{eq:step1-new}) holds.
\end{theorem}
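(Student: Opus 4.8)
The plan is to maximize $\delta$ in (\ref{eq:bound1}) by a two-stage optimization: first over the algorithm parameter $c$ for a fixed $\mu$, and then over $\mu > 1$. Write $\sigma_+ = \sigma_{\max}(M_+)$ and $\sigma_- = \tilde{\sigma}_{\min}(M_-)$ for brevity, and denote the two arguments of the minimum in (\ref{eq:bound1}) by $T_1(\mu) = \frac{(\mu-1)\sigma_-^2}{\mu\sigma_+^2}$ and $T_2(\mu,c) = \frac{m_f}{\frac{c}{4}\sigma_+^2 + \frac{\mu}{c}M_f^2\sigma_-^{-2}}$, so that $\delta = \min\{T_1, T_2\}$. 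Since $T_1$ does not depend on $c$, maximizing $\min\{T_1, T_2\}$ over $c$ for a fixed $\mu$ amounts to maximizing $T_2$, that is, to minimizing its denominator, because the pointwise minimum is monotone in $T_2$.

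First I would minimize the denominator $\frac{c}{4}\sigma_+^2 + \frac{\mu}{c}M_f^2\sigma_-^{-2}$ over $c > 0$. This is a sum of the form $ac + b/c$ whose minimizer is $c = \sqrt{b/a}$, by the AM--GM inequality or by a first-order condition. A direct computation gives the minimizer $c = c_\mathrm{t} = \frac{2\mu^{1/2}M_f}{\sigma_+\sigma_-}$, which is exactly (\ref{eq:c}), and the minimized denominator equals $\frac{\sqrt{\mu}\,\sigma_+ M_f}{\sigma_-}$. Substituting back and invoking the definitions $\kappa_f = M_f/m_f$ and $\kappa_\mathrm{G} = \sigma_+/\sigma_-$ yields $T_2(\mu, c_\mathrm{t}) = \frac{1}{\sqrt{\mu}\,\kappa_f\kappa_\mathrm{G}}$, while $T_1(\mu) = \frac{1}{\kappa_\mathrm{G}^2}\big(1 - \frac{1}{\mu}\big)$.

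Next I would optimize over $\mu > 1$. The key observation is that $T_1(\mu)$ is strictly increasing in $\mu$ (from $0$ at $\mu = 1$ up toward $\kappa_\mathrm{G}^{-2}$), while $T_2(\mu, c_\mathrm{t})$ is strictly decreasing (from $(\kappa_f\kappa_\mathrm{G})^{-1}$ at $\mu = 1$ down toward $0$). Both are continuous, so the maximum of their pointwise minimum is attained at the unique crossover where $T_1(\mu) = T_2(\mu, c_\mathrm{t})$. Setting the two equal and substituting $t = \mu^{-1/2} \in (0,1)$ reduces the equation to the quadratic $t^2 + \frac{\kappa_\mathrm{G}}{\kappa_f} t - 1 = 0$, whose positive root is $t = \frac{1}{2}\big(\sqrt{\kappa_\mathrm{G}^2/\kappa_f^2 + 4} - \kappa_\mathrm{G}/\kappa_f\big)$. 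Squaring $t$ recovers the expression for $\mu^{-1}$ in (\ref{eq:mu}); moreover the bound $\sqrt{\kappa_\mathrm{G}^2/\kappa_f^2 + 4} < \kappa_\mathrm{G}/\kappa_f + 2$ forces $t < 1$, hence $\mu > 1$ as claimed. Finally, evaluating $\delta_\mathrm{t} = T_2(\mu, c_\mathrm{t}) = t/(\kappa_f\kappa_\mathrm{G})$ and factoring $\kappa_\mathrm{G}$ out of the square root yields (\ref{eq:bound3}).

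The routine parts are the AM--GM minimization over $c$ and the closing algebraic simplifications; the step requiring the most care is justifying that the separate optimizations over $c$ and $\mu$ combine into the global maximizer and that the $\mu$-optimum is the interior crossover rather than a boundary limit. Thus the main (though mild) obstacle is establishing the monotonicity of $T_1$ and $T_2(\cdot, c_\mathrm{t})$ in $\mu$, which pins the maximizer of the minimum exactly at their intersection, and then confirming $\mu > 1$, a condition needed both for $\delta > 0$ and for the very applicability of (\ref{eq:bound1}).
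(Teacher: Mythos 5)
Your proposal is correct and follows essentially the same route as the paper: optimize over $c$ first (only the second term of the min depends on $c$, and AM--GM gives $c_\mathrm{t}$), then pick $\mu$ at the crossover of the increasing first term and the decreasing second term. The substitution $t=\mu^{-1/2}$ and the monotonicity checks are just an explicit rendering of the "simple calculations" the paper leaves to the reader, and the algebra checks out.
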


\begin{proof}
Observing the two values inside the minimization operator in
(\ref{eq:bound1}), we find that only the second term is relevant
with $c$. It is easy to check that the value of $c$ in
(\ref{eq:c}), no matter how $\mu$ is chosen, maximizes $\delta$ as
\begin{equation}\label{eq:bound2}
\begin{array}{cl}
\delta = \min \left\{
\frac{(\mu-1)\tilde{\sigma}_{\min}^2(M_-)}{\mu
\sigma_{\max}^2(M_+)}, \frac{m_f
\tilde{\sigma}_{\min}(M_-)}{\mu^{\frac{1}{2}} M_f
\sigma_{\max}(M_+)} \right\}.
\end{array}
\end{equation}
Inside the minimization operator in (\ref{eq:bound2}), the first
and second terms are monotonically increasing and decreasing with
regard to $\mu>1$, respectively. To maximize $\delta$, we choose a
value of $\mu$ such that the two terms are equal. Simple
calculations show that the value of $\mu$ in (\ref{eq:mu}), which
is larger than 1, satisfies this condition. The resulting maximum
value of $\delta$ is the one in (\ref{eq:bound3}).
\end{proof}

The value of $\delta$ in (\ref{eq:bound3}) is monotonically
decreasing with regard to $\kappa_f \geq 1$ and $\kappa_\mathrm{G} > 0$.
This conclusion suggests that a smaller condition number
$\kappa_f$ of $f(x)$ and a smaller condition number $\kappa_\mathrm{G}$ of
the graph lead to faster convergence. On the other hand, if these
condition numbers keep increasing, the convergence can go
arbitrarily slow. In fact, the limit of $\delta$ in
(\ref{eq:bound3}) is 0 as $\kappa_f \rightarrow \infty$ or
$\kappa_\mathrm{G} \rightarrow \infty$. Given $\delta_\mathrm{t}$, the upper bound of $\delta$, we define the upper bound of
the convergence rate as $$\rho_\mathrm{t} = \sqrt{\frac{1}{1+\delta_\mathrm{t}}}.$$

\section{Numerical Experiments}\label{sec: num_exp}

In this section, we provide extensive numerical experiments and supplement to
validate our theoretical analysis. We introduce experimental settings in Section \ref{sec:4a} and then study the influence of different
factors on the convergence rate in Sections \ref{sec:4b}
through \ref{sec:4e}.

\subsection{Experimental Settings} \label{sec:4a}

We generate a network consisting of $L$ agents and possessing at
most $\frac{L(L-1)}{2}$ edges. If the network is randomly
generated, we define $p$, the connectivity ratio of the network,
as its actual number of edges divided by $\frac{L(L-1)}{2}$. Such a random network is generated with $\frac{L(L-1)}{2}p$ edges that are uniformly randomly chosen, while ensuring the network
connected.

We apply the ADMM to a decentralized consensus least
squares problem
\begin{equation}\label{eq:LS1}
\begin{array}{cl}
\min\limits_{\tilde{x}}
\sum\limits_{i=1}^{L}\frac{1}{2}\|v_i-U_i\tilde{x}\|_2^2.
\end{array}
\end{equation}
Here $\tilde{x}\in\mathbb{R}^3$ is the unknown signal to estimate
and its true values follow the normal distribution $\mathcal{N}(0,I)$,
$U_i\in\mathbb{R}^{3\times3}$ is the linear measurement matrix of
agent $i$ whose elements follow $\mathcal{N}(0,1)$ by default, and $v_i\in\mathbb{R}^3$ is the
measurement vector of agent $i$ whose elements are polluted by
random noise following $\mathcal{N}(0,0.1)$.
In Section \ref{sec:4d} the elements of the matrices $U_i$ need to
be further manipulated to produce different condition numbers
$\kappa_f$ of the objective functions. We reformulate
(\ref{eq:LS1}) into the form of (\ref{eq:nc}) as
\begin{equation}\label{eq:LS2}
\begin{array}{cl}
\min\limits_{\{x_i\},\{z_{ij}\}} &\sum\limits_{i=1}^{L}\frac{1}{2}\|v_i-U_ix_i\|_2^2, \\
\mbox{s.t.} &x_i=z_{ij},~x_j=z_{ij},~ \forall (i,j)\in\mathcal{A}.
\end{array}
\end{equation}
The solution to (\ref{eq:LS1}) is denoted by $x^*$ in which the part of agent $i$ is denoted by
$x_i^*$. The algorithm is stopped once
$\|x^k-x^*\|_2$ reaches $10^{-15}$ or the number of iterations $k$
reaches $4000$, whichever is earlier.

In the numerical experiments, we choose to record the primal error
$\|x^k-x^*\|_2$ instead of $\|u^k-u^*\|_G$ as the latter incurs
significant extra computation when the number of agents $L$ is
large. But note that $\|x^k-x^*\|_2$ is not necessarily monotonic
in $k$. Let the transient convergence rate be
$\rho_k=\frac{\|x^k-x^*\|_2}{\|x^{k-1}-x^*\|_2}$. As $\rho_k$
fluctuates, we report the \emph{running geometric-average} rate of
convergence $\bar{\rho}_k$ given by
\begin{equation}\label{eq:rho_avg}
\begin{array}{cl}
\bar{\rho}_k&:=\left(\prod\limits_{s=1}^{k}{\rho_s}\right)^{1/k}\\
            &=\left(\frac{\|x^k-x^*\|_2}{\|x^0-x^*\|_2}\right)^{1/k}\\
            &\leq\sqrt{\frac{1}{1+\delta}}\left(\sqrt{\frac{1+\delta}{m_f}}\frac{\|u^0-u^*\|_G}{\|x^0-x^*\|_2}\right)^{1/k},
\end{array}
\end{equation}
which follows from (\ref{eq:final}) and (\ref{eq:step1-new}).
While $u^0$, $u^*$, $x^0$  $x^*$, and $m_f$ influence
$\bar{\rho}_k$, observing
$$\bar{\rho}:=\lim\limits_{k\rightarrow+\infty}\bar{\rho}_k\leq\sqrt\frac{1}{1+\delta},$$
we see that their influence diminishes and the steady state
$\bar{\rho}$ is upper bounded by $\sqrt\frac{1}{1+\delta}$ as $\rho$ is. Throughout the numerical experiments, we report
$\bar{\rho}_k$ and $\bar{\rho}$.

In the following subsections, we demonstrate how different factors
influence the convergence rate. We firstly show the evidence of
linear convergence, and along the way, the influence of the
connectivity ratio $p$ on the convergence rate (see Section
\ref{sec:4b}). Secondly, we compare the practical convergence rate
using the best theoretical algorithm parameter $c=c_\mathrm{t}$ in
(\ref{eq:c}) and that using the
best hand-tuned parameter $c=c^*$ (see Section \ref{sec:4c}).
Thirdly, we check the effect of $\kappa_f$, the condition number
of the objective function (see Section \ref{sec:4d}). Finally, we
show how $\kappa_f$, the condition number of the network, as well
as other network parameters, influence the convergence rate (see
Section \ref{sec:4e}). The numerical experiments are summarized in Table II.

\begin{table}\label{tab: sumup}
\centering\caption{Summary of the Numerical Experiments}
\begin{tabular}{c|c|c}
\hline\hline
Section&Factor&Conclusion\\
\hline\hline
\ref{sec:4b}&$p$, connectivity ratio&Larger $p$ leads to faster convergence\\
\hline
\ref{sec:4c}&$c$, algorithm parameter &$c \simeq 0.5c_\mathrm{t}$ works well \\
\hline
\ref{sec:4d}&$\kappa_f$, condition number of objective function &Larger $\kappa_f$ leads to slower convergence\\
\hline
\ref{sec:4e}&$\kappa_\mathrm{G}$, condition number of network &Larger $\kappa_\mathrm{G}$ leads to slower convergence\\
\hline
\ref{sec:4e}&$D$, network diameter &Larger $D$ leads to slower convergence\\
\hline
\ref{sec:4e}&$d_s$, geometric average degree &Larger $d_s$ leads to faster convergence\\
\hline
\ref{sec:4e}&$L_d$, imbalance of bipartite graphs &Larger $L_d$ leads to faster convergence\\
\hline
\end{tabular}
\end{table}

\begin{figure}
\begin{center}
\includegraphics[width=8cm]{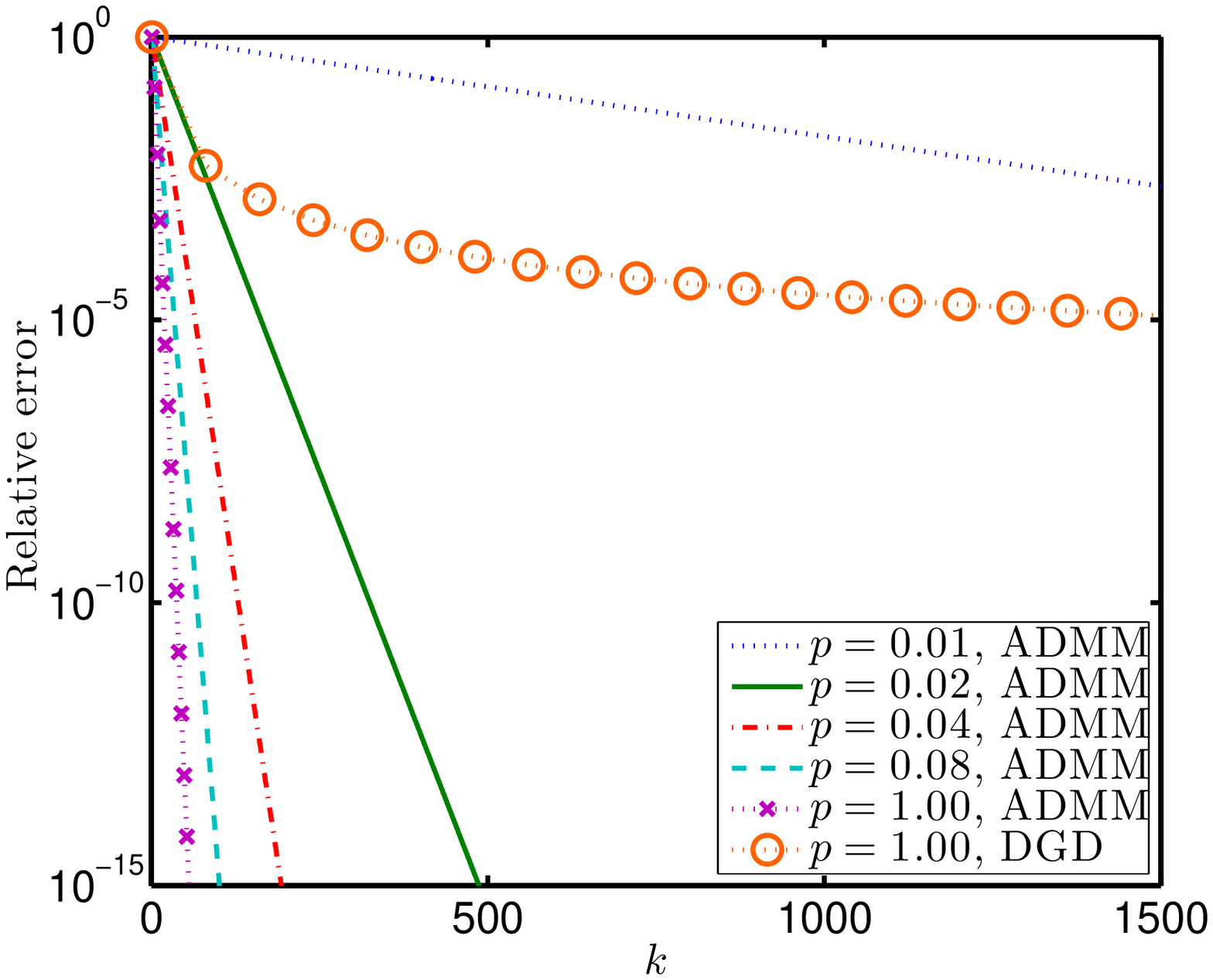}
\caption{Relative error $\frac{\|x^k-x^*\|_2}{\|x^*\|_2}$ versus
iteration $k$.} \label{eps: ADMM_Linear_Convergence-1}
\end{center}
\end{figure}

\begin{figure}
\begin{center}
\includegraphics[width=8cm]{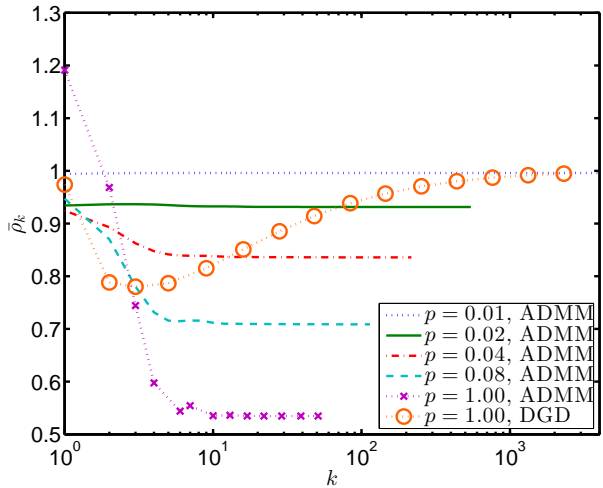}
\caption{Running geometric-average rate of convergence
$\bar{\rho}_k$ versus iteration $k$.} \label{eps:
ADMM_Linear_Convergence-2}
\end{center}
\end{figure}

\subsection{Linear Convergence}\label{sec:4b}

To illustrate linear convergence of the ADMM for decentralized
consensus optimization, we generate random networks consisting of
$L=200$ agents. The connectivity ratio of the networks, $p$, is set to
different values. The ADMM parameter is set as $c=c_\mathrm{t}$ (\ref{eq:c}).

Fig. \ref{eps: ADMM_Linear_Convergence-1} depicts how the relative
error, $\frac{\|x^k-x^*\|_2}{\|x^*\|_2}$, varies in $k$. Obviously
the convergence rates are linear for all $p$; a higher
connectivity ratio leads to faster convergence. Fig. \ref{eps:
ADMM_Linear_Convergence-2} plots $\bar{\rho}_k$, which stabilizes
within $10$ iterations. From Fig. \ref{eps:
ADMM_Linear_Convergence-1} and Fig. \ref{eps:
ADMM_Linear_Convergence-2}, one can observe that for such randomly
generated networks, varying the connectivity ratio $p$ within the
range $[0.08, 1]$ does not significantly change the convergence
rate. The reason is that when $p$ is larger than a certain
threshold, its value makes little influence on $\kappa_\mathrm{G}$
(see Table III in Section \ref{sec:4c}). We will discuss more
about the influence of $\kappa_\mathrm{G}$ in Section
\ref{sec:4d}.

As a comparison, we also demonstrate the convergence of the
distributed gradient descent (DGD) method in Fig. \ref{eps:
ADMM_Linear_Convergence-1} and Fig. \ref{eps:
ADMM_Linear_Convergence-2}. Using a diminishing stepsize
$1/k^{1/3}$ \cite{Jakovetic2012}, the DGD shows sublinear
convergence that is slow even for a complete graph (i.e., $p=1$).

\subsection{Algorithm Parameter}\label{sec:4c}

Here we discuss the influence of the ADMM parameter $c$ on
the convergence rate. The best theoretical value $c=c_\mathrm{t}$ in
(\ref{eq:c}), though optimizing the upper bound of the convergence
rate, does not give best practical performance. We vary $c$,
and plot the steady-state running geometric-average rates of
convergence $\bar{\rho}$ in Fig. \ref{eps: traversing_c}. For each
curve that corresponds to a unique $p$, we mark the
best theoretical value $c_\mathrm{t}$ and the best practical value
$c^*$. Consistently, $c_\mathrm{t}$ are larger than $c^*$.

\begin{figure}
\begin{center}
\includegraphics[width=8cm]{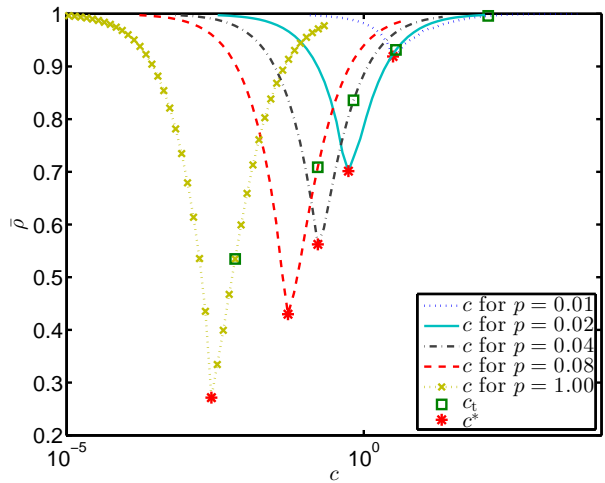}
\caption{Steady-state running geometric-average rate of
convergence $\bar{\rho}$ versus algorithm parameter
$c$.}\label{eps: traversing_c}
\end{center}
\end{figure}

\begin{figure}
\begin{center}
\includegraphics[width=8cm]{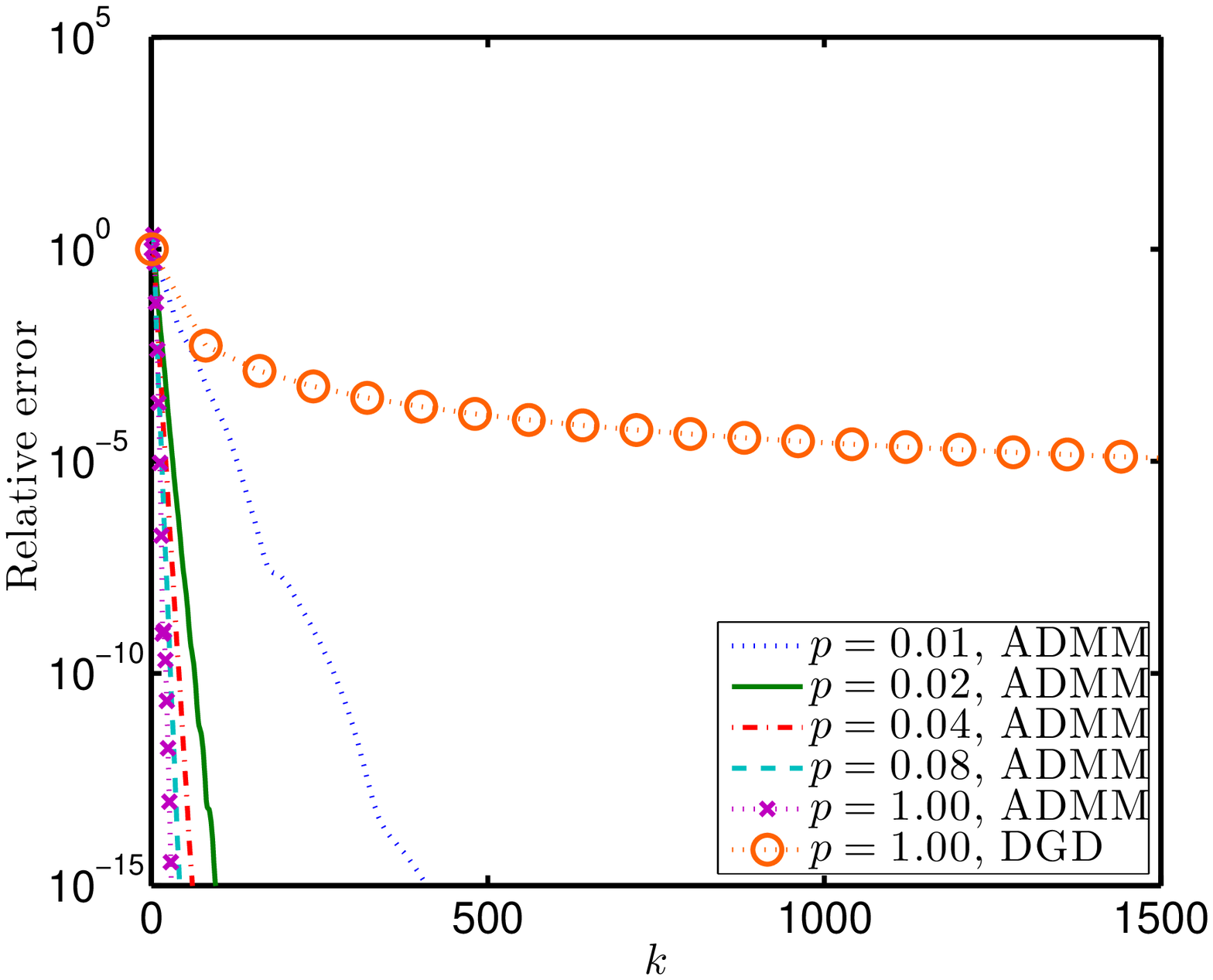}
\caption{Relative error $\frac{\|x^k-x^*\|_2}{\|x^*\|_2}$
versus iteration $k$.} \label{eps: best_c-1}
\end{center}
\end{figure}

\begin{figure}
\begin{center}
\includegraphics[width=8cm]{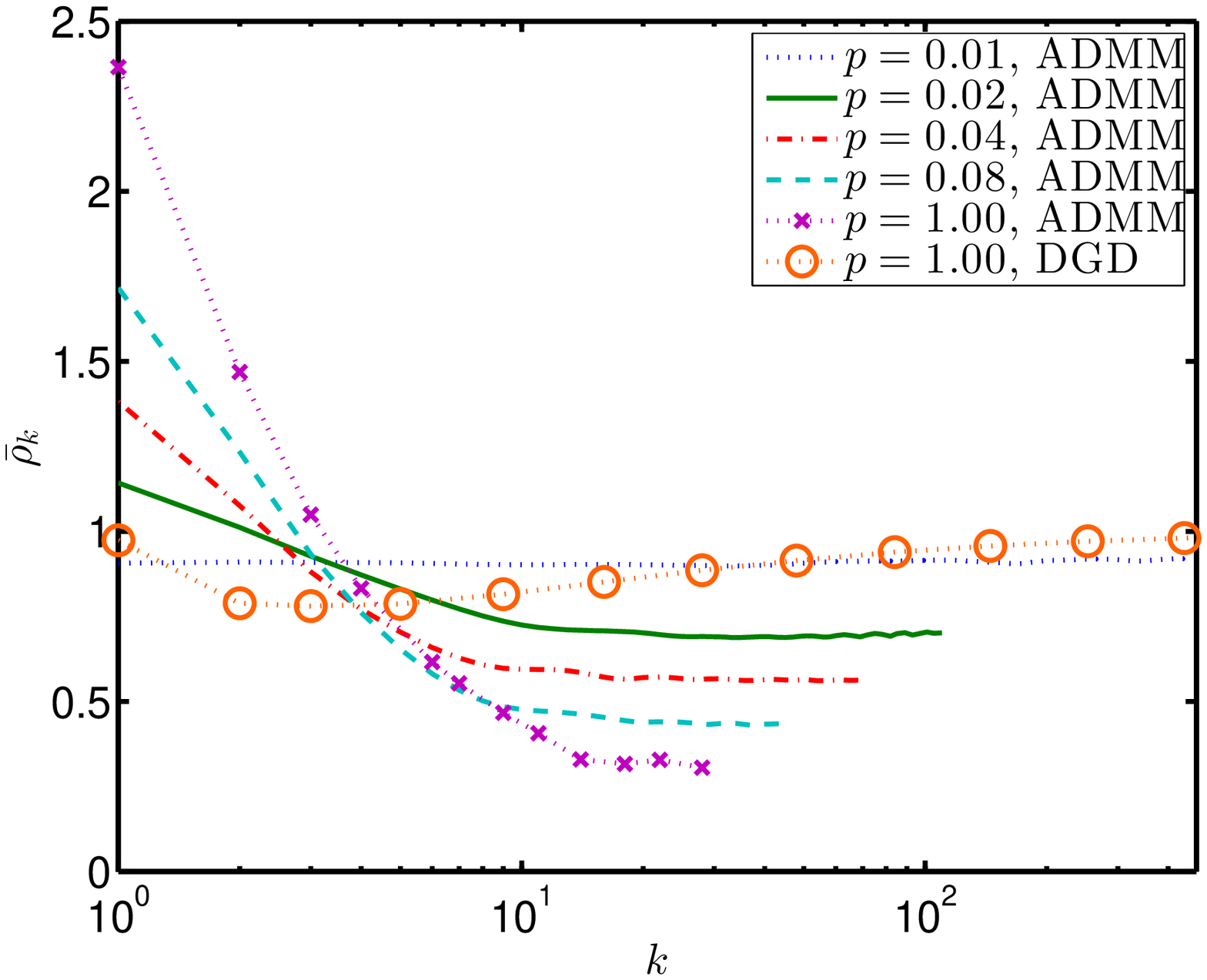}
\caption{Running geometric-average rate of convergence
$\bar{\rho}_k$ versus iteration $k$.} \label{eps:
best_c-2}
\end{center}
\end{figure}

Now we set $c=c^*$, the hand-tuned optimal value, and plot $\frac{\|x^k-x^*\|_2}{\|x^*\|_2}$
in Fig. \ref{eps: best_c-1} as per Fig. \ref{eps:
ADMM_Linear_Convergence-1} and $\bar{\rho}_k$ in Fig. \ref{eps: best_c-2} as per
Fig. \ref{eps: ADMM_Linear_Convergence-2}. Comparing to those
using $c=c_\mathrm{t}$, the best theoretical value, in Fig. \ref{eps:
ADMM_Linear_Convergence-1} and Fig. \ref{eps:
ADMM_Linear_Convergence-2}, the convergence improves
significantly. The numerical quantities of Figs. \ref{eps:
ADMM_Linear_Convergence-1}, \ref{eps:
ADMM_Linear_Convergence-2}, \ref{eps: best_c-1}, and \ref{eps: best_c-2} are given in Table III.

It appears that $c_\mathrm{t}$ is a stable overestimate of $c^*$.
Therefore, we recommend $c=\theta c_\mathrm{t}$ for nearly optimal
convergence using some $\theta \in (0,1)$. Fig. \ref{eps:
effect_of_beita} illustrates the convergence corresponding to
different values of $\theta$. We randomly generate $4000$
connected networks with $L=200$ agents whose connectivity ratios
are uniformly distributed on $[\frac{2}{L},1]$. The random
networks are divided into $20$ groups according to their condition
numbers $\kappa_\mathrm{G}$. For each group of the random networks, the values of $\bar{\rho}$ are plotted with error bars, and compared with the
theoretical upper bound $\rho_\mathrm{t}$. For this dataset,
$\theta \simeq 0.5$ appear to be a good overall choice. A smaller
$\theta$ imposes a risk of slower convergence when $\kappa_\mathrm{G}$ is
small.

\begin{table}
\centering\caption{Settings and convergence rates corresponding to
Fig. \ref{eps: ADMM_Linear_Convergence-1}, Fig. \ref{eps:
ADMM_Linear_Convergence-2}, Fig. \ref{eps: best_c-1}, and Fig.
\ref{eps: best_c-2}}
\begin{tabular}{c||c||c|c||c|c||c}
\hline\hline
Connectivity ratio $p$&          & \multicolumn{2}{|c||}{Best theoretical $c_\mathrm{t}$}& \multicolumn{2}{|c||}{Best practical $c^*$}& Theoretical rate \\
\cline{3-6}
($L=200$ agents) &$\kappa_\mathrm{G}$&$c$       &$\bar{\rho}$    &$c$       &$\bar{\rho}$    &$\rho_\mathrm{t}$\\
\hline\hline
$0.01$    &$33.00$   &$123.8$    &$0.9960$          &$3.110$   &$0.9189$          &$0.9908$\\
\hline
$0.02$    &$7.032$   &$3.477$   &$0.9314$          &$0.5510$  &$0.7014$          &$0.9806$\\
\hline
$0.04$  &$3.500$   &$0.6714$   &$0.8358$          &$0.1687$   &$0.5624$          &$0.9295$\\
\hline
$0.08$   &$2.221$   &$0.1677$ &$0.7088$          &$0.05303$ &$0.4297$          &$0.8526$\\
\hline
$1.00$     &$1.411$   &$0.006837$&$0.5348$          &$0.002722$&$0.2714$          &$0.7313$\\
\hline
\end{tabular}
\end{table}

\begin{figure}
\begin{center}
\includegraphics[width=8cm]{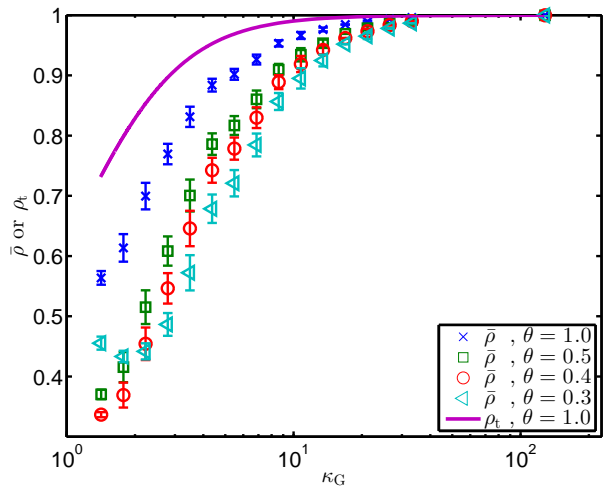}
\caption{Convergence performance obtained with $c=\theta c_\mathrm{t}$ for
varying $\theta$, where $c_\mathrm{t}$ is analytically given in
(\ref{eq:c}).}\label{eps: effect_of_beita}
\end{center}
\end{figure}

\subsection{Condition Number of the Objective Function}\label{sec:4d}

Now we study how $\kappa_f$, the condition number of the objective
function, affects the convergence rate. We generate random
networks consisting of $L=200$ agents with different connectivity
ratios $p$. We set $c=c_\mathrm{t}$. To produce different
$\kappa_f$, we first generate a linear measurement matrix $U_i$
with its elements following $\mathcal{N}(0,1)$. Second, we apply singular value decompositions
to $U_i$, scale the singular values to the range
$[\sqrt{\frac{1}{\kappa_f}},1]$, and rebuild $U_i$.

Fig. \ref{eps: kappa_f_rate} shows that the theoretical
convergence rates $\rho_\mathrm{t}$ are monotonically increasing
as $\kappa_f$ increases, which is consistent with Theorem
\ref{theorem2}. When the connectivity ratios $p$ are small, the
trend of $\bar{\rho}$ disobeys the theoretical analysis. It is
because that our upper bound of the convergence rate, becomes
loose when the network connectedness is poor. When the network is
well-connected (say $p=1$), we can observe a positive correlation
between $\bar{\rho}$ and $\kappa_f$, which coincides with the
theoretical analysis.

\begin{figure}[H]
\begin{center}
\includegraphics[width=8cm]{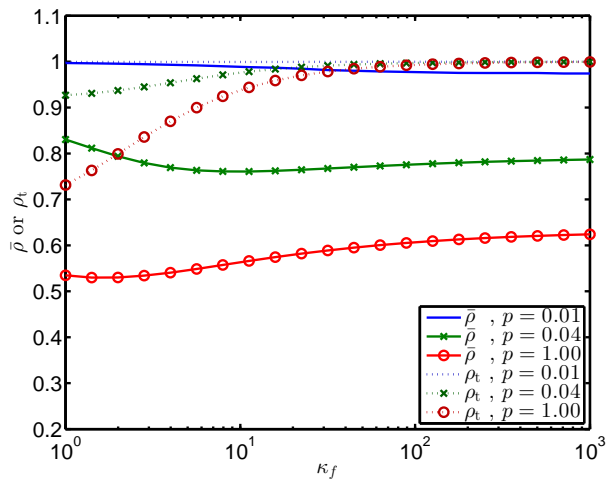}\\
\caption{Convergence performance versus the condition number
$\kappa_f$ of the objective function at different connectivity
ratios $p$.}\label{eps: kappa_f_rate}
\end{center}
\end{figure}

\subsection{Network Topology}\label{sec:4e}

Last we study how the network topology affects the convergence
rate. Besides the condition number $\kappa_\mathrm{G}$ of the network that
is relevant, we also consider other
network parameters including the network diameter, geometric average
degree, as well as imbalance of bipartite networks. In the
numerical experiments, the local objective functions are generated
as described in Section \ref{sec:4a}. The algorithm parameter is set as $c=c_\mathrm{t}$.

\subsubsection{Condition Number of the Network}

As it is difficult to precisely design $\kappa_\mathrm{G}$, the
condition number of the network, we run a large number of trials
to sample $\kappa_\mathrm{G}$. We randomly generate $4000$
connected networks with $L=50,200,500$ agents, 12000 networks in
total. Their connectivity ratios are uniformly distributed on
$[\frac{2}{L},1]$. In addition, we generate special networks with
topologies of the line, cycle, star, complete, and grid types. The
grid networks are generated in a 3D space ($2 \times 5 \times 5$,
$5 \times 5 \times 8$, and $5 \times 10 \times 10$).

Fig. \ref{eps: effect_of_L} depicts the effect of
$\kappa_\mathrm{G}$ on the convergence rate. In Fig. \ref{eps:
effect_of_L}, the dashed curve with error bars correspond to the
random networks, and the individual points correspond to the
special networks. There is only one dashed curve in the plot since
$L=50,200,500$ do not make significant differences. The networks
of the line, cycle, complete, and grid topologies generate points
in the plot that are nearly on the dashed curve, which indicates
that $\kappa_\mathrm{G}$ is a good indicator for convergence rate.
In addition, the trends of $\bar\rho$, the steady-state running
geometric-average rate of convergence, and $\rho_\mathrm{t}$, the
theoretical rate of convergence, are consistent. The points
corresponding to the three networks of the star topology are away
from the dashed side.

We observe that the convergence rate is closely related to
$\kappa_\mathrm{G}$, less to $L$. To reach a target convergence rate, one
therefore shall have a sufficiently small $\kappa_\mathrm{G}$, which in turn depends on $L$ and $p$, as well as other factors. To obtain a
sufficiently small $\kappa_\mathrm{G}$, typically, $p$ needs to be large if
$L$ is small, but not as large if $L$ is large. In other words, if
one has a network with a large number of agents (say $L=200$), a
small connectivity ratio (say $p=0.1$) will lead to a small
$\kappa_\mathrm{G}$ and thus fast convergence.

With the same $\kappa_\mathrm{G}$, the networks with the star topology have
much faster convergence than random networks. We shall discuss
this special topology at the end of this subsection.


\begin{figure}
\begin{center}
\includegraphics[width=8cm]{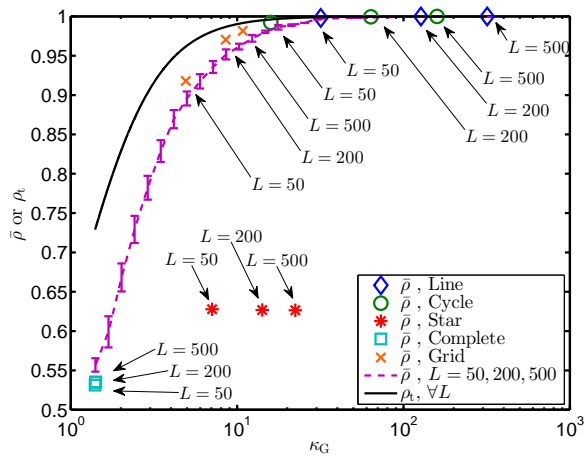}
\caption{Convergence performance versus the condition number of
the network $\kappa_\mathrm{G}$ obtained with networks of
different topologies (random, line, cycle, star, complete, and
grid) and of different sizes ($L=50,200,500$).}\label{eps:
effect_of_L}
\end{center}
\end{figure}

\subsubsection{Network Diameter}

The network diameter $D$ is defined as the longest distance
between any pair of agents in the network. In decentralized
consensus optimization, $D$ is related to how many iterations the
information from one agent will reach all the other agents.

To discuss the effect of the network diameter on the convergence
rate, we randomly generated $4000$ connected networks with $L=200$
agents and connectivity ratios uniformly distributed on
$[\frac{2}{L},1]$. We also generate the networks of the line,
cycle, star, complete, and grid topologies. Most randomly
generated networks possess small diameters. In this experiment,
the numbers of those with $D=2$, $3\leq D\leq4$ and $5\leq
D\leq198$ are $3141$, $717$ and $142$, respectively. From Fig.
\ref{eps: diameter}, we conclude that in general a larger diameter
tends to cause a worse condition number of the network and thus
slower convergence, though this relationship is interfered by
network properties.

\begin{figure}
\begin{center}
\includegraphics[width=8cm]{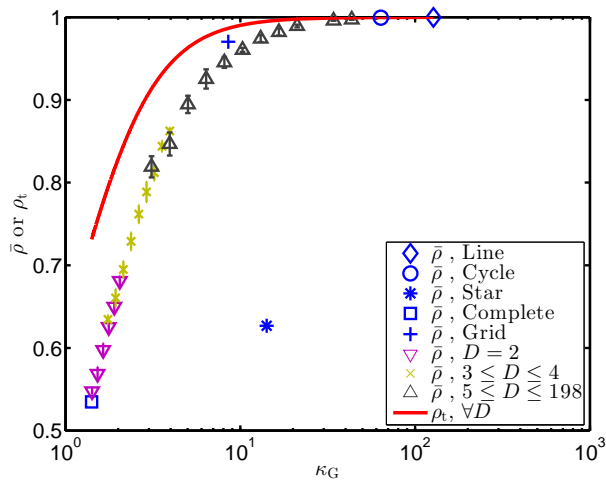}
\caption{Convergence performance versus the condition number
$\kappa_\mathrm{G}$ of the network and the network diameter $D$
obtained with networks of different topologies (random, line,
cycle, star, complete, and grid) and of size $L=200$.} \label{eps:
diameter}
\end{center}
\end{figure}

\subsubsection{Geometric Average Degree}

Define $d_{\min}$ and $d_{\max}$ as the largest and smallest
degrees of the agents in the network, respectively. The geometric
average degree $d_s=\sqrt{d_{\min}d_{\max}}$ reflects the agents'
number of neighbors in a geometric average sense. Its value
reaches maximum at $L-1$ if the topology is complete; and reaches
minimum $\sqrt{2}$ when the topology is a line.

Again, we randomly generated $4000$ connected networks with
$L=200$ agents and connectivity ratios uniformly distributed on
$[\frac{2}{L},1]$. We also generate the networks of the line,
cycle, star, complete, and grid topologies. Out of the randomly
generated networks, $417$ have $2\leq d_s\leq 20$, $1576$ have
$21\leq d_s\leq 100$, and $1956$ have $101\leq d_s\leq198$. From
Fig. \ref{eps: d_s}, we observe that a larger $d_s$ generally
implies better connectedness and thus a smaller condition number
of the network as well as faster convergence. This conclusion is
similar to the one on the network diameter $D$ (see Fig. \ref{eps:
diameter}).

\begin{figure}
\begin{center}
\includegraphics[width=8cm]{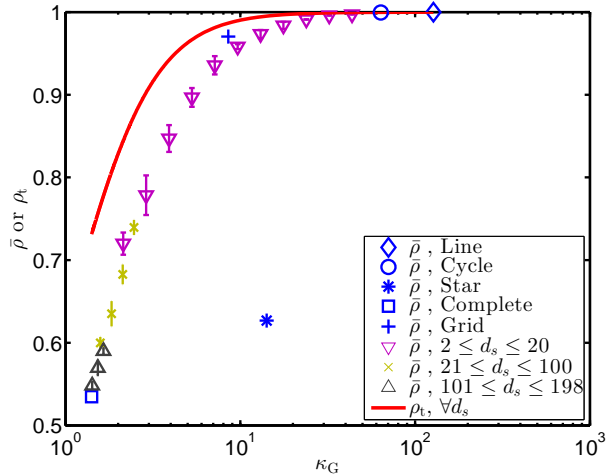}
\caption{Convergence performance versus the condition number
$\kappa_\mathrm{G}$ of the network and the geometric average
degree $d_s$ obtained with networks of different topologies
(random, line, cycle, star, complete, and grid) and of size
$L=200$.} \label{eps: d_s}
\end{center}
\end{figure}

\begin{figure}
\begin{center}
\includegraphics[width=8cm]{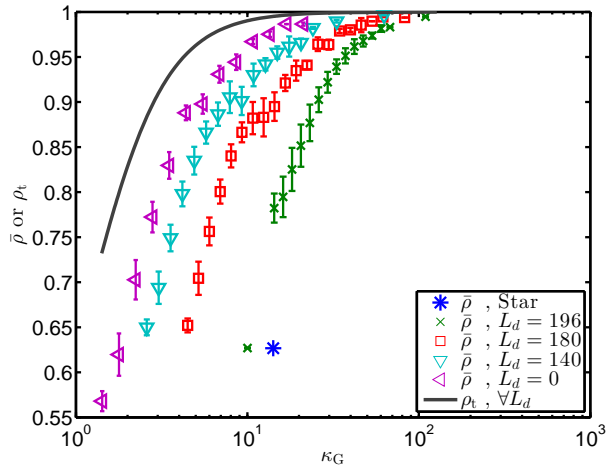}
\caption{Convergence performance versus the condition number
$\kappa_\mathrm{G}$ of the network and the imbalance of bipartite
networks $L_d$ obtained with networks of random and star
topologies and of size $L=200$.} \label{eps: study_in_bipartite}
\end{center}
\end{figure}

\subsubsection{Imbalance of Bipartite Networks}

Let $\mathcal{B}(\mathcal{L}_A,\mathcal{L}_B)$ denote the class of
bipartite networks with $|\mathcal{L}_A|$ agents in one group and
$|\mathcal{L}_B|$ agents in another group. Agents within either
group cannot directly communicate with each other. For a bipartite network
consisting of $L=|\mathcal{L}_A|+|\mathcal{L}_B|$ agents, its
imbalance is defined as $L_d = |\mathcal{L}_A|-|\mathcal{L}_B|$, which can vary between $0$ and $L-2$.


We randomly generate $1000$ bipartite graphs of size $L=200$,
whose connectivity ratios $p$ are uniformly distributed on
$[\frac{2}{L},\frac{(L+L_d)(L-L_d)}{2L(L-1)}]$, for each of the
cases $L_d=196,180,140,0$. The star topology corresponds to a
special bipartite network with $L_d=L-2=198$. From Fig. \ref{eps:
study_in_bipartite}, we find that for the same $\kappa_\mathrm{G}$, the
networks with larger $L_d$ have faster convergence. An extreme
example is the network of the star topology. This observation
suggests us to assign few ``hot spots'' to relay information for
fast convergence, if $\kappa_\mathrm{G}$ is fixed in advance. However, this
approach may cause robustness or scalability issues because the
relaying agents are subject to extensive communication burden.
Hence there is a tradeoff between fast convergence and robustness
or scalability in network design.

\section{Conclusions} \label{sec: conclusion}

We apply the ADMM to a reformulation of a general decentralized
consensus optimization problem. We show that if the objective
function is strongly convex, the decentralized ADMM converges at a
globally linear rate, which can be given explicitly. It is revealed
that several factors affect the convergence rate that include the
topology-related properties of the network, the condition number
of the objective function, and the algorithm parameter. Numerical
experiments corroborate and supplement our theoretical findings. Our analysis sheds
light on how to construct a network and tune the algorithm
parameter for fast convergence.

\appendix


\begin{proof} Consider the ADMM updates (\ref{eq:beta-update}) and
the KKT conditions (\ref{eq:KKT-new}). Subtracting the three
equations in (\ref{eq:KKT-new}) from the corresponding equations
in (\ref{eq:beta-update}) yields
\begin{equation} \label{eq:v2line1}
\begin{array}{cl}
\nabla f(x^{k+1})-\nabla f(x^*) = cM_+(z^k-z^{k+1}) - M_-(\beta^{k+1}-\beta^*), \\
\end{array}
\end{equation}
\begin{equation} \label{eq:v2line2}
\begin{array}{cl}
\frac{c}{2}M_-^T(x^{k+1}-x^*)=\beta^{k+1}-\beta^k, \\
\end{array}
\end{equation}
\begin{equation} \label{eq:v2line3}
\begin{array}{cl}
\frac{1}{2}M_+^T(x^{k+1}-x^*)=z^{k+1}-z^*, \\
\end{array}
\end{equation}
respectively.

To prove the Q-linear convergence of $\|u^{k+1}-u^*\|_G^2$ we use
$m_f \|x^{k+1}-x^*\|_2^2$ as an intermediate. Based on Assumption
\ref{ass: functions}, $f(x)$ is strongly convex with a constant
$m_f$ such that
\begin{equation}\label{eq:v2main-1}
\begin{array}{cl}
     & m_f \|x^{k+1}-x^*\|_2^2 \leq \langle x^{k+1}-x^*, \nabla f(x^{k+1}) - \nabla f(x^*) \rangle. \\
\end{array}
\end{equation}
Using (\ref{eq:v2line1}), we can split the right-hand side of
(\ref{eq:v2main-1}) to two terms
\begin{equation}\label{eq:v2main-2}
\begin{array}{cl}
     & \langle x^{k+1}-x^*, \nabla f(x^{k+1}) - \nabla f(x^*) \rangle \\
=    & \langle x^{k+1}-x^*, cM_+(z^k-z^{k+1}) - M_-(\beta^{k+1}-\beta^*) \rangle \\
=    & \langle x^{k+1}-x^*, cM_+(z^k-z^{k+1}) \rangle + \langle x^{k+1}-x^*, - M_-(\beta^{k+1}-\beta^*) \rangle \\
=    & c \langle M_+^T(x^{k+1}-x^*), z^k-z^{k+1} \rangle + \langle - M_-^T(x^{k+1}-x^*), \beta^{k+1}-\beta^* \rangle. \\
\end{array}
\end{equation}
Substituting (\ref{eq:v2line2}) and (\ref{eq:v2line3}) to
(\ref{eq:v2main-2}) we can eliminate the term $x^{k+1}-x^*$ and
obtain
\begin{equation}\label{eq:v2main-3}
\begin{array}{cl}
     & \langle x^{k+1}-x^*, \nabla f(x^{k+1}) - \nabla f(x^*) \rangle \\
=    & 2c \langle z^k-z^{k+1}, z^{k+1}-z^* \rangle + \frac{2}{c} \langle \beta^k-\beta^{k+1}, \beta^{k+1}-\beta^* \rangle. \\
\end{array}
\end{equation}
Recall the definition of $u$ and $G$ defined in (\ref{eq:ug}). It
is obvious that the right-hand side of (\ref{eq:v2main-3}) can be
written as a compact form $2(u^k-u^{k+1})^T G (u^{k+1}-u^*)$. Using
the equality $2(u^k-u^{k+1})^T G (u^{k+1}-u^*) = \|u^k-u^*\|_G^2 -
\|u^{k+1}-u^*\|_G^2 - \|u^k-u^{k+1}\|_G^2$, (\ref{eq:v2main-3}) is
equivalent to
\begin{equation}\label{eq:v2main-4}
\begin{array}{cl}
     & \langle x^{k+1}-x^*, \nabla f(x^{k+1}) - \nabla f(x^*) \rangle \\
=    & \|u^k-u^*\|_G^2 - \|u^{k+1}-u^*\|_G^2 - \|u^k-u^{k+1}\|_G^2, \\
\end{array}
\end{equation}
and consequently using (\ref{eq:v2main-1})
\begin{equation}\label{eq:v2main}
\begin{array}{cl}
     & m_f \|x^{k+1}-x^*\|_2^2 \\
\leq & \|u^k-u^*\|_G^2 - \|u^{k+1}-u^*\|_G^2 - \|u^k-u^{k+1}\|_G^2. \\
\end{array}
\end{equation}

Having (\ref{eq:v2main}) at hand, to prove (\ref{eq:final}) we
only need to show
\begin{equation}\label{eq:step2}
\begin{array}{cl}
\|u^k-u^{k+1}\|_G^2 + m_f\|x^{k+1}-x^*\|_2^2 \geq \delta \|u^{k+1}-u^*\|_G^2, \\
\end{array}
\end{equation}
which is equivalent to
\begin{equation}\label{eq:step3}
\begin{array}{cl}
c\|z^{k+1}-z^k\|_2^2 + \frac{1}{c} \|\beta^{k+1}-\beta^k\|_2^2 + m_f\|x^{k+1}-x^*\|_2^2 \geq \delta c \|z^{k+1}-z^*\|_2^2 + \frac{\delta}{c} \|\beta^{k+1}-\beta^*\|_2^2. \\
\end{array}
\end{equation}
The idea of proof is to show that $\delta c \|z^{k+1}-z^*\|_2^2$
and $\frac{\delta}{c} \|\beta^{k+1}-\beta^*\|_2^2$ are upper
bounded by two non-overlapping parts of the left-hand side of
(\ref{eq:step3}), respectively.

The upper bound of $\|z^{k+1}-z^*\|_2^2$ follows from
(\ref{eq:v2line3}) that shows
$\frac{1}{2}M_+^T(x^{k+1}-x^*)=z^{k+1}-z^*$. Hence we have
\begin{equation}\label{eq:line3-new}
\begin{array}{cl}
     & \|z^{k+1}-z^*\|_2^2 \\
=    & \frac{1}{4}\|M_+^T(x^{k+1}-x^*)\|_2^2 \\
\leq & \frac{1}{4}\sigma_{\max}^2(M_+)\|x^{k+1}-x^*\|_2^2, \\
\end{array}
\end{equation}
where $\sigma_{\max}(M_+)$ is the largest singular value of $M_+$.
To find the upper bound of $\|\beta^{k+1}-\beta^*\|_2^2$, we use
two inequalities $\sigma_{\max}^2(M_+)\|z^{k+1}-z^k\|_2^2 \geq
\|M_+^T(z^{k}-z^{k+1})\|_2^2$ and $M_f^2 \|x^{k+1}-x^*\|_2^2 \geq
\|\nabla f(x^{k+1})-\nabla f(x^*)\|_2^2$; the latter holds since
$f(x)$ has Lipschitz continuous gradients with a constant $M_f$.
Therefore, given the positive algorithm parameter $c$ and any $\mu
> 1$ it holds
\begin{equation}\label{eq:line1-new-new-1}
\begin{array}{cl}
     & c^2\sigma_{\max}^2(M_+)\|z^{k+1}-z^k\|_2^2+(\mu-1)M_f^2 \|x^{k+1}-x^*\|_2^2 \\
\geq & \|cM_+^T(z^{k}-z^{k+1})\|_2^2 + (\mu-1)\|\nabla f(x^{k+1})-\nabla f(x^*)\|_2^2. \\
\end{array}
\end{equation}
Recall that from (\ref{eq:v2line1}) $cM_+(z^k-z^{k+1})$ is the
summation of $\nabla f(x^{k+1})-\nabla f(x^*)$ and
$M_-(\beta^{k+1}-\beta^*)$. Hence we can apply the basic
inequality $\|a+b\|_2^2 + (\mu-1) \|a\|_2^2 \geq (1-\frac{1}{\mu})
\|b\|_2^2$, which holds for any $\mu > 0$, to
(\ref{eq:line1-new-new-1}) and obtain
\begin{equation}\label{eq:line1-new-new-2}
\begin{array}{cl}
     & c^2\sigma_{\max}^2(M_+)\|z^{k+1}-z^k\|_2^2+(\mu-1)M_f^2 \|x^{k+1}-x^*\|_2^2 \\
\geq & (1-\frac{1}{\mu}) \|M_-(\beta^{k+1}-\beta^*)\|_2^2. \\
\end{array}
\end{equation}
Since by assumption $\beta^0$ is initialized such that it lies in
the column space of $M_-^T$, we know that $\beta^{k+1}$ lies in
the column space of $M_-^T$ too; see the ADMM updates
(\ref{eq:beta-update}). Because $\beta^*$ also lies in the column
space of $M_-^T$, $\|M_-(\beta^{k+1}-\beta^*)\|_2^2 \geq
\tilde{\sigma}_{\min}^2(M_-) \|\beta^{k+1}-\beta^*\|_2^2$ where
$\tilde{\sigma}_{\min}(M_-)$ is the smallest nonzero singular
value of $M_-$. Therefore from (\ref{eq:line1-new-new-2}) we can
upper bound $\|\beta^{k+1}-\beta^*\|_2^2$ by
\begin{equation}\label{eq:line1-new-new}
\begin{array}{cl}
     & c^2\sigma_{\max}^2(M_+)\|z^{k+1}-z^k\|_2^2+(\mu-1)M_f^2 \|x^{k+1}-x^*\|_2^2 \\
\geq & (1-\frac{1}{\mu}) \tilde{\sigma}_{\min}^2(M_-) \|\beta^{k+1}-\beta^*\|_2^2. \\
\end{array}
\end{equation}

Combining (\ref{eq:line3-new}) and (\ref{eq:line1-new-new}), we
prove (\ref{eq:step3}). From (\ref{eq:line3-new}) we have
\begin{equation}\label{eq:line3-new-final}
\begin{array}{cl}
     & \frac{c}{4}\sigma_{\max}^2(M_+)\|x^{k+1}-x^*\|_2^2 \\
\geq & c \|z^{k+1}-z^*\|_2^2. \\
\end{array}
\end{equation}
From (\ref{eq:line1-new-new}) we have
\begin{equation}\label{eq:line1-new-new-final}
\begin{array}{cl}
     & \frac{c \mu \sigma_{\max}^2(M_+)}{(\mu-1) \tilde{\sigma}_{\min}^2(M_-)}\|z^{k+1}-z^k\|_2^2+\frac{\mu M_f^2}{c \tilde{\sigma}_{\min}^2(M_-)} \|x^{k+1}-x^*\|_2^2 \\
\geq & \frac{1}{c} \|\beta^{k+1}-\beta^*\|_2^2. \\
\end{array}
\end{equation}
Summing up (\ref{eq:line3-new-final}) and
(\ref{eq:line1-new-new-final}) yields
\begin{equation}\label{eq:theorem1_temp}
\begin{array}{cl}
     & \frac{c \mu \sigma_{\max}^2(M_+)}{(\mu-1) \tilde{\sigma}_{\min}^2(M_-)}\|z^{k+1}-z^k\|_2^2+ \left(\frac{\mu M_f^2}{c \tilde{\sigma}_{\min}^2(M_-)}+\frac{c}{4}\sigma_{\max}^2(M_+)\right) \|x^{k+1}-x^*\|_2^2 \\
\geq & c \|z^{k+1}-z^*\|_2^2 + \frac{1}{c} \|\beta^{k+1}-\beta^*\|_2^2. \\
\end{array}
\end{equation}
Apparently, $\delta$ in (\ref{eq:bound1}) satisfies
\begin{equation}\label{eq:step3-1}
\begin{array}{cl}
c\|z^{k+1}-z^k\|_2^2 + m_f\|x^{k+1}-x^*\|_2^2 \geq \delta c \|z^{k+1}-z^*\|_2^2 + \frac{\delta}{c} \|\beta^{k+1}-\beta^*\|_2^2, \\
\end{array}
\end{equation}
and consequently (\ref{eq:step3}), which proves (\ref{eq:final}).

To prove the R-linear convergence of $x^{k}$ to $x^*$, we observe
that (\ref{eq:v2main}) implies $m_f \|x^{k+1}-x^*\|_2^2 \leq
\|u^k-u^*\|_G^2$, which proves (\ref{eq:step1-new}).
\end{proof}

\bibliographystyle{IEEEtran}
\bibliography{ADMM}
\end{document}